\newtheorem{theorem}{Theorem}[section]
\newtheorem{proposition}[theorem]{Proposition}
\newtheorem{lemma}[theorem]{Lemma}
\newtheorem{definition}[theorem]{Definition}
\newcommand{\vc}{\|\cdot\|}
\newcommand{\C}{\mathbb{C}}
\newcommand{\Z}{\mathbb{Z}}
\newcommand{\Si}{\Sigma}
\newcommand{\R}{\mathbb{R}}
\newcommand{\p}{\mathbb{P}}
\newcommand{\eps}{\varepsilon}
\newcommand{\si}{\sigma}
\newcommand{\h}{\mathcal{H}}
\newcommand{\Q}{\mathbb{Q}}
\newcommand{\N}{\mathbb{N}}
\newcommand{\z}{\overline{z}}
\newcommand{\m}{\omega}
\title{On the normalized arithmetic Hilbert function}
\date{\today, \currenttime}
\author{Mounir Hajli}
\begin{document}

\maketitle
\begin{abstract}
Let   $X\subset \p^N_{\overline{\Q}}$ be a  subvariety of
dimension $n$, and 
$\h_{\mathrm{norm}}(X;\cdot)$ 
the normalized arithmetic Hilbert function of $X$ introduced
by Philippon and Sombra. We show
that this function admits the following
asymptotic expansion 
\[
\mathcal{H}_{\rm{norm}}(X;D)=\frac{\widehat{h}(X)}{(n+1)!}D^{n+1}+o(D^{n+1})\quad \forall D\gg 1,
\]
where $\widehat{h}(X)$ is the  normalized height of $X$.
This gives a positive answer to a question raised by  
Philippon and Sombra.
\end{abstract}
\begin{center}

Keywords: Arithmetic Hilbert function, Height.
\end{center}

\begin{center}

MSC: 11G40, 11G50, 11G35.

\end{center}

\section{Introduction}

In \cite{PS}, Philippon and Sombra introduce an arithmetic 
Hilbert function defined for any subvariety in $\p^N$, the
projective space of dimension $N$ over $\overline{\Q}$. This 
function measures the binary complexity of the subvariety. 
In the case of toric subvarieties, a result of Philippon and Sombra shows that 
the asymptotic behaviour of the associated normalized 
 arithmetic Hilbert function is
related to the normalized height of the subvariety considered,
see \cite[Proposition 0.4]{PS}. This result is an important
step toward the proof of the main theorem of \cite{PS}, that
is an explicit formula for the normalized height of
projective translated 
toric varieties, see \cite[Th\'eor\`eme 0.1]{PS}.

In \cite[Question 2.2]{PS}, the authors ask if 
the normalized arithmetic Hilbert function admits an
asymptotic expansion similar to the toric case. More precisely,
given $X$ a subvariety  of dimension $n$ in $\p
^N$ the projective space of dimension $N$ over $\overline{\Q}
$, can we find a real 
$c(X)\geq 0$ such that
\[
\mathcal{H}_{\mathrm{norm}}(X;D)=\frac{c(X)}{(n+1)!}D^{n+1}+
o(D^{n+1})?
\]
If this is the case, do we have 
$c(X)=\widehat{h}(X)$? where $\widehat{h}(X)$ is the
normalized height of $X$.

In this article, we give an affirmative answer to this 
question. We prove the following theorem

\begin{theorem}\label{m7}[Theorem \eqref{m6}]
Let $X\subset \p^N$ be a subvariety of dimension $n$ in 
$\p^N$. Then the normalized arithmetic Hilbert function 
associated to $X$ admits the following asymptotic expansion
\[
\mathcal{H}_{\mathrm{norm}}(X;D)=\frac{\widehat{h}(X)}{(n+1)!}D^{n+1}+
o(D^{n+1}),\quad \forall D\gg 1.
\]

\end{theorem}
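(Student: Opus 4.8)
The plan is to reduce the statement to the arithmetic Hilbert--Samuel theorem. Since $X$ is defined over $\overline{\Q}$ it is defined over some number field $K$; let $\overline{X}\subset\p^N_{\mathcal{O}_K}$ be the Zariski closure of $X$, a projective model over $\mathrm{Spec}\,\mathcal{O}_K$ of relative dimension $n$, and equip $\mathcal{O}(1)$ at the archimedean places with the canonical ($\ell^\infty$) metric $\overline{\mathcal{O}(1)}_{\mathrm{can}}$ underlying the normalized height. The first step is to show that $\mathcal{H}_{\mathrm{norm}}(X;D)$ coincides, up to an error $O(D^{n}\log D)=o(D^{n+1})$, with the Arakelov degree $\widehat{\deg}\,\overline{H^0(\overline{X},\mathcal{O}(D))}$ of the Hermitian lattice of global sections equipped with the sup-norm (equivalently the $L^2$-norm) attached to this metric. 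This amounts to matching the Philippon--Sombra normalization with the Arakelov degree: the number of local comparisons involved (finitely many places of bad reduction, plus the passage between the norm used in the definition of $\mathcal{H}_{\mathrm{norm}}$ via heights of Hilbert/Chow points and the $L^2$/sup-norm) is finite, and each such comparison perturbs $\widehat{\deg}$ by at most $O\!\left(\dim H^0(X,\mathcal{O}(D))\cdot\log D\right)=O(D^{n}\log D)$.

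Next I would apply the arithmetic Hilbert--Samuel theorem to $(\overline{X},\overline{\mathcal{O}(1)}_{\mathrm{can}})$. Since the canonical metric is semipositive but not smooth, I would approximate it from above and below by smooth positive metrics $\overline{\mathcal{O}(1)}^{\pm}_{\varepsilon}$, apply the classical arithmetic Hilbert--Samuel formula (Gillet--Soul\'e, Abbes--Bouche, Zhang) to each approximation to get
\[
\widehat{\deg}\,\overline{H^0(\overline{X},\mathcal{O}(D))} = \frac{\widehat{\deg}\!\left(\widehat{c}_1(\overline{\mathcal{O}(1)}^{\pm}_{\varepsilon})^{\,n+1}\cap[\overline{X}]\right)}{(n+1)!}\,D^{n+1} + o(D^{n+1}),
\]
and then let $\varepsilon\to 0$, using the continuity of the arithmetic top self-intersection number under uniform convergence of semipositive metrics (Zhang, Moriwaki). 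Combining with the first step this yields $\mathcal{H}_{\mathrm{norm}}(X;D) = \tfrac{1}{(n+1)!}\widehat{\deg}\!\left(\widehat{c}_1(\overline{\mathcal{O}(1)}_{\mathrm{can}})^{\,n+1}\cap[\overline{X}]\right)D^{n+1}+o(D^{n+1})$. Finally, the self-intersection number $\widehat{\deg}\!\left(\widehat{c}_1(\overline{\mathcal{O}(1)}_{\mathrm{can}})^{\,n+1}\cap[\overline{X}]\right)$ is by definition the normalized height $\widehat{h}(X)$, namely the height associated to the canonical adelic metric on $\mathcal{O}(1)$ whose archimedean part is the $\ell^\infty$ metric, as used by Philippon--Sombra; if one takes instead Philippon's Chow-form definition of the height, one inserts here the standard comparison between the two descriptions. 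In particular $\widehat{h}(X)\geq 0$, consistent with the constant $c(X)\geq 0$ asked for.

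The main obstacle is the first step: making precise the dictionary between the intrinsically defined $\mathcal{H}_{\mathrm{norm}}$ (built from heights of Hilbert/Chow points, with no chosen integral model) and the Arakelov degree of a lattice of sections for a fixed model, and checking that every discrepancy --- choice of model at finite places, comparison of the relevant norms in a space of dimension $O(D^{n})$, and the $\chi$- versus successive-minima normalization --- is genuinely $O(D^{n}\log D)$, hence negligible. The non-smoothness of the canonical metric is a secondary but real difficulty, handled by the approximation argument above; alternatively one may run the entire argument from the outset inside the adelic/DSP arithmetic intersection theory of Chambert-Loir and Philippon--Sombra, which accommodates the canonical metric directly and avoids smooth approximations, at the cost of importing a Hilbert--Samuel statement in that framework.
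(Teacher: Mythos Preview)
Your outline is correct and follows essentially the same strategy as the paper: reduce to the arithmetic Hilbert--Samuel formula for smooth positive metrics, and bridge to the canonical $\ell^\infty$ metric by approximation. The paper carries this out with the explicit family $h_k(\cdot,\cdot)=|\cdot|^2/(\sum_i|x_i|^{2k})^{1/k}$, which are smooth, positive, and converge uniformly to $h_\infty$; it then compares $\mathcal{H}_{\mathrm{norm}}$ \emph{directly} with the arithmetic function $\mathcal{H}_{\mathrm{arith}}(X;D,k)$ attached to $h_k$ (rather than first comparing with the canonical metric and then approximating, as you propose). The one-sided inequality $\mathcal{H}_{\mathrm{norm}}\le\mathcal{H}_{\mathrm{arith}}(\cdot,k)$ comes from $\langle x^a,x^a\rangle_k\le 1$, and the reverse inequality, up to an $O(D^{n+1})\cdot(\varepsilon+k^{-1})$ error, is obtained from an explicit lower bound on $\langle x^a,x^a\rangle_k$ via an elementary integral computation. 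Randriambololona's Hilbert--Samuel then gives $\mathcal{H}_{\mathrm{arith}}(X;D,k)=\frac{h_{\overline{\mathcal{O}(1)}_k}(X)}{(n+1)!}D^{n+1}+o(D^{n+1})$, and one lets $k\to\infty$.

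Two small points where the paper is sharper than your sketch. First, your ``dictionary'' step is exactly the content of the paper's Lemma relating $\mathcal{H}_{\mathrm{arith}}(X;D,k)$ to $\widehat{\deg}(\overline{\Gamma(\Sigma,\mathcal{O}(D)_{|\Sigma})}_k)$ (an adaptation of \cite[Lemme~2.6]{PS}); there is no bad-reduction bookkeeping needed, since the non-archimedean sup-norms already match the lattice structure of the model. Second, you do not actually need smooth approximations from \emph{both} sides: uniform convergence $h_k\to h_\infty$ gives a two-sided $(1\pm\varepsilon)^{2D}$ bound on the section norms, and that is what produces the sandwich. Note also that the relevant error in the metric comparison is $O(D^{n+1})\cdot o_k(1)$, not $O(D^n\log D)$; only the sup-norm versus $\ell^2$-on-Pl\"ucker-coordinates comparison is of the latter size.
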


The notion of normalized height plays an 
important role in the diophantine approximation on 
tori, in particular in Bogomolov's and generalized 
Lehmer's problems, see \cite{DP}, \cite{Amoroso}. A result
of Zhang shows that a subvariety $X$ with a vanishing
normalized height is necessarily a union of toric subvarieties,
see \cite{Zhang1}.\\

 Gillet and Soul\'e proved an arithmetic 
Hilbert-Samuel formula as a consequence of the arithmetic 
Riemann-Roch theorem, see \cite{ARR}. Roughly speaking, 
 this formula
describes the asymptotic behaviour of the arithmetic
degree of a hermitian module defined by the global sections
of the tensorial power of a positive hermitian line bundle on 
an arithmetic variety. Moreover, the leading term is 
given by the arithmetic degree of the hermitian line bundle.
Later Abb\`es and Bouche gave a new proof for this result
without using the arithmetic Riemann-Roch theorem, see \cite{Abbes}. 
Randriambololona extends the result Gillet and Soul\'e
 to  the case of 
coherent 
sheaf provided as a subquotient of a metrized vector
bundle on  an arithmetic variety, see \cite{Ran}.

\subsection{Notations}

Let $\Q$ be the field of rational numbers, $\Z$ the ring of integers, $K$
a number field and $\mathcal{O}_K$ its ring of integers. For 
$N$ and $D$ two integers in  $\N$ we set 
 $\N_D^{N+1}:=\{a\in \N^{N+1}|\, a_0+\cdots+a_N=D\}$. $\C[x_0,\ldots,x_N]_D$
 (resp. $K[x_0,\ldots,x_N]_D$)
 denotes the complex vector space (resp. $K$-vector space) 
 of homogeneous polynomials of
 degree $D$ in $\C[x_0,\ldots,x_N]$ (resp. in $K[x_0,\ldots,x_N]$).
 
 For any  prime number $p$ we denote by $|\cdot|_p$ the $p$-adic 
 absolute value
  on $\Q$ such that $|p|_p=p^{-1}$ and   by $|\cdot|_\infty$
 or simply $|\cdot|$ the standard absolute value. Let 
 $M_\Q$ be the set of these absolute values. We denote by $M_K$  the set of
 absolute values of $K$ extending the absolute values of $M_\Q$, and
 by $M_K^\infty$ the subset in $M_K$   of 
 archimedean absolute values.
 
We denote by  $\p^N$  the projective space over  $\overline{\Q}$ of dimension $N$. A variety
 is assumed reduced and irreducible.

\subsubsection{Acknowledgements }
I am very grateful to Mart\'in Sombra for his helpful 
conversations and encouragement during the preparation 
of this paper. I would like to thank Vincent Maillot for
his useful discussions.

\section{The proof of Theorem \eqref{m7}}

We keep the same notations as in \cite{PS}. Let $\omega$ be 
the Fubini-Study form on
$\p^N(\C)$. For any $k\in \N_{\geq 1}\cup\{\infty\}$, we denote by $h_k$
the hermitian metric on $\mathcal{O}(1)$ given as follows
\[
h_k(\cdot,\cdot)=\frac{|\cdot|^2}{(|x_0|^{2k}+\cdots+|x_N|^{2k})^{\frac{1}{2k}}},\, \forall k\in \N_{\geq 1}\,\, \text{and}\,\quad h_\infty(\cdot,\cdot)=\frac{|\cdot|^2}{
\max(|x_0|,\ldots,|x_N|)^2}, 
\]
and we let $\overline{\mathcal{O}(1)}_k:=(\mathcal{O}(1),h_k)$ and 
 $\m_k:=c_1(\mathcal{O}(1),h_k)$ for any $k\in \N\cup\{\infty\}$. 
 Note that
 $\m_k=\frac{1}{k}[k]^\ast \m$, where  
$[k]:\p^N(\C)\rightarrow \p^N(\C),\, [x_0:\ldots:x_N]\mapsto [x_0^k:
\ldots:x_N^k]$. Observe that the sequence $(\omega_k)_{k\in \N_{\geq 1}}$ converges weakly to the current $\omega_\infty$. We consider the following
normalized volume form
\[
\Omega_k:= \m_k^{\wedge N}\quad \forall
k\in \N_{\geq 1}\cup \{\infty\}.
\]

For any $k\in \N_{\geq 1}\cup \{\infty\}$, the metric of $\overline{\mathcal{O}(1)}_k$ and 
$\Omega_k$ define a scalar product  $\C[x_0,\ldots,x_N]_D$ denoted by
$\langle \cdot ,\cdot \rangle_k$ given  as
follows 
\begin{equation}\label{m1}
\langle f,g \rangle_k:=\int_{\p^N(\C)}h_k^{\otimes D}(f,g)\Omega_k,
\end{equation}
for any
$f=\sum_a f_a x^a$, $g=\sum_a g_a x^a$ in $\C[x_0,\ldots,x_N]_D$ with
$f_a,g_a\in \C$. We denote by $\vc_k$ the associated 
norm for any $k\in \N_{\geq 1}\cup \{\infty\}$. Note that,
 $\langle f,g \rangle_\infty=\sum_{a}f_a \overline{g}_a$ and 
 $\|x^a\|_\infty=1$ for any $a\in \N_D^{N+1}$ and $D\in \N$. \\

Let $X\subset \p^N$ be a subvariety defined over a number field
$K$. 
Let $v\in M_K^\infty$ and $\si_v:K\rightarrow \C$ the 
corresponding embedding. For any $p_1,\ldots,p_l\in K[x_0,\ldots,
x_N]_D$ we set
\[
\|p_1\wedge \cdots \wedge p_l\|_{k,v}:=\|\si_v(p_1)\wedge \cdots 
\wedge \si_v(p_l)\|_k\quad \forall k\in \N\cup\{\infty\}.
\]

Let $\mathcal{O}(D):=\mathcal{O}(1)^{\otimes D}$. We set 
$M:=\Gamma(\Si,\mathcal{O}
(D)_{|_{\Si}})$ the $\mathcal{O}_K$-module of global sections
of $\mathcal{O}(D)_{|_{\Si}}$, where $\Si$ is the 
Zariski closure of $X$ in $\p^N_{\mathcal{O}_K}$.
For any $v\in M_K^\infty$, we
set $\Gamma(\Si,\mathcal{O}(D)_{|_{\Si}})_{\si_v}:=
\Gamma(\Si,\mathcal{O}(D)_{|_{\Si}})\otimes_{\si_v}\C$. We consider 
the following restriction map 
\[
\pi:\Gamma(\p^N_{\mathcal{O}_K},
\mathcal{O}
(D))_{\si_v}\rightarrow \Gamma(\Si,\mathcal{O}(D)_{|_{\Si}}
)_{\si_v}
\rightarrow 0.
\]
The space $\Gamma(\p^N_{\mathcal{O}_K},
\mathcal{O}
(D))_{\si_v}$ is identified canonically to $K_\si[x_0,\ldots,x_N]_D$.
For any $k\in \N_{\geq 1}\cup\{\infty\}$, this space can
be endowed by the scalar product induced by $\Omega
_k$ and $h_k$, denoted by $\langle \cdot,\cdot \rangle_{k,v}$:
\[
\langle f,g\rangle_{k,v}=\langle \si_v(f),\si_v(g)\rangle_k,
\]
for any $f,g\in \Gamma(\p^N_{\mathcal{O}_K},
\mathcal{O}
(D))_{\si_v}$.
Since $\mathcal{O}(1)$ is ample, then there exists $D_0\in \N$ such that for any $D\geq D_0$,  the restriction map
is surjective. Let $D\geq D_0$, for any $k\in \N\cup \{\infty\}$,
 we denote by $\vc_{k,v,\rm{quot}}$ the quotient norm induced
by $\pi$ and $\vc_{k,v}$.  Following   \cite[p.348]{PS}, we 
endow 
 $\Gamma(\Si,\mathcal{O}(D)_{|_{\Si}})_{\si_v}$
with $\vc_{k,v,\mathrm{quot}}$, for any $k\in \N_{\geq 
 1}\cup
\{\infty\}$.  By this construction, $M$ can be equipped
with a structure of a hermitian $\mathcal{O}_K$-module, denoted
by $\overline{M}_k$. If $f_1,\ldots,f_s\in M$, is a $K$-basis for 
$M\otimes_{\mathcal{O}_K} K$, then
\[
\widehat{\deg}(\overline{M}_k)=\widehat{\deg}(\overline{\Gamma(\Si,\mathcal{O}(D)_{|_{\Si}})}_k
):=\frac{1}{[K:\Q]}\Bigl(\log \mathrm{Card}\bigl(\bigwedge^s M/(f_1\wedge \cdots \wedge f_s)\bigr)  
-\sum_{v:K\rightarrow \C}\log\|f_1\wedge \cdots f_s\|_{k,v} \Bigr).
\]

\subsection{The normalized arithmetic Hilbert function}

Let $X\subset \p^N$ be a subvariety defined over a number field
$K$ and $I:=I(X)\subset K[x_0,\ldots,x_N]$ its ideal of
definition. We set
\[
\h_{geom}(X;D):=\dim_K\bigl(K[x_0,\ldots,x_N]/I \bigr)_D=
\binom{D+N}{N}-\dim_K(I_D).
\]
This function $\h_{geom}(X;\cdot)$ is known as \textit{the 
classical geometric Hilbert function}. In
\cite{PS}, Philippon and Sombra introduce an arithmetic 
analogue of this function. Let $m:=\h_{geom}(X;D)$, 
$l:=\dim_K(I_D)$ and 
\[
\bigwedge^l K[x_0,\ldots,x_N]_D,
\]
the $l$-th exterior power product of $K[x_0,\ldots,x_N]_D$. For
$f\in \bigwedge^l K[x_0,\ldots,x_N]_D $ and $v\in M_K
$ we denote by $|f|_v$ the sup-norm of the coefficients of $f$
at the place $v$, with respect to the standard basis of
$\bigwedge^l K[x_0,\ldots,x_N]_D$. 

\begin{definition} 
(\cite[D\'efinition 2.1]{PS}) Let $p_1,
\ldots,p_l$ be a $K$-basis
of $I_D$, we set
\[
\begin{split}
\mathcal{H}_{\mathrm{norm}}(X;D)=&\sum_{v\in M_K}\frac{[K_v:\Q_v]}{[K:\Q]}
\log|p_1\wedge \cdots \wedge p_m|_v.
\end{split}
\]

\end{definition}

By the product formula, this definition does not depend on
the choice of the basis, also it is invariant by finite
extensions of $K$. 
$\h_{\mathrm{norm}}(X;\cdot)$ is called  
 the normalized arithmetic Hilbert  
function of $X$. 

Following Philippon and Sombra, this arithmetic Hilbert function measures, for any $D\in \N$, the binary
complexity of the $K$-vector space of forms of degree 
$D$ in $K[x_0,\ldots,x_N]$ modulo $I$. As pointed out by
Philippon and Sombra, when $X$ is a toric variety, the
 asymptotic behaviour of its associated normalized arithmetic 
 Hilbert
 function is related
to $\widehat{h}(X)$, the normalized height  of $X$, see \cite[Proposition 0.4]{PS}. The authors ask the following
question:

Given $X$ a subvariety in $\p
^N$ of dimension $n$, can we find a real 
$c(X)\geq 0$ such that
\[
\mathcal{H}_{\mathrm{norm}}(X;D)=\frac{c(X)}{(n+1)!}D^{n+1}+
o(D^{n+1})?
\]
If this is the case, do we have $c(X)=\widehat{h}(X)$?\\

 We recall
the following proposition, which gives a dual formulation for
$\h_{norm}$,
\begin{proposition}\label{w10}
Let $q_1,\ldots,q_m\in K[x_0,\ldots,x_N]_D^{ \vee}$ be a $K$-basis
of $\mathrm{Ann}(I_D)$, then
\[
\begin{split}
\mathcal{H}_{\mathrm{norm}}(X;D)=&\sum_{v\in M_K}\frac{[K_v:\Q_v]}{[K:\Q]}
\log|q_1\wedge \cdots \wedge q_m|_v.
\end{split}
\]
\end{proposition}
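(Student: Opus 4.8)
The plan is to relate the exterior-algebra quantity over $\bigwedge^l K[x_0,\ldots,x_N]_D$ to the one over $\bigwedge^m K[x_0,\ldots,x_N]_D^\vee$ through the natural pairing between a subspace and the annihilator of its complement. Concretely, fix the standard monomial basis $(x^a)_{a\in\N_D^{N+1}}$ of $K[x_0,\ldots,x_N]_D$ and its dual basis $(\delta_a)_a$ of the dual space. If $p_1,\ldots,p_l$ is a $K$-basis of $I_D$, complete it to a basis $p_1,\ldots,p_l,p_{l+1},\ldots,p_{l+m}$ of all of $K[x_0,\ldots,x_N]_D$, and let $q_1,\ldots,q_m$ be the dual basis vectors corresponding to $p_{l+1},\ldots,p_{l+m}$; these form a basis of $\mathrm{Ann}(I_D)$ since a linear functional kills $I_D$ iff it is supported on the last $m$ coordinates in the $p$-basis. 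The key linear-algebra identity is that for each place $v$ the multivector norms $|p_1\wedge\cdots\wedge p_l|_v$, $|q_1\wedge\cdots\wedge q_m|_v$, and the norm $|p_1\wedge\cdots\wedge p_{l+m}|_v$ of the full change-of-basis determinant are related by a bounded factor — and, crucially, after summing over all $v\in M_K$ with the weights $\frac{[K_v:\Q_v]}{[K:\Q]}$, the product formula collapses the full-determinant contribution to zero and the bounded local factors also contribute zero in the sum.

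First I would make the duality between top exterior powers explicit: for a subspace $V\subset W$ of a finite-dimensional space with $\dim W=l+m$, $\dim V=l$, and a fixed basis of $W$ inducing a volume form, there is a canonical isomorphism $\bigwedge^l V\otimes \bigwedge^m(W/V)\cong \bigwedge^{l+m}W$, and $\bigwedge^m(W/V)$ is canonically identified (via the volume form on $W^\vee$) with $\bigwedge^m \mathrm{Ann}(V)\subset\bigwedge^m W^\vee$. Chasing this isomorphism on the level of coordinate vectors in the standard bases shows that, with the normalizations in the definition, $|p_1\wedge\cdots\wedge p_l|_v\cdot|q_1\wedge\cdots\wedge q_m|_v$ equals $|\det(p_1,\ldots,p_l,p_{l+1},\ldots,p_{l+m})|_v$ up to a factor bounded by a constant depending only on $N$ and $D$ (coming from Gram/Hadamard-type estimates and from the fact that wedge-of-coordinate-vectors norms are comparable to sup-norms of minors up to binomial factors). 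At the archimedean places one can even arrange equality by choosing orthonormal complements, but a uniform bound at all places suffices.

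Then I would finish by the product formula. Choosing $p_{l+1},\ldots,p_{l+m}$ to be $m$ of the standard monomials $x^a$ completing $\{p_i\}$ to a basis of $K[x_0,\ldots,x_N]_D$, the full determinant $\det(p_1,\ldots,p_l,x^{a_1},\ldots,x^{a_m})$ is a single nonzero element of $K^\times$, so $\sum_{v}\frac{[K_v:\Q_v]}{[K:\Q]}\log|\det(\cdots)|_v=0$ by the product formula. The comparison factors between the two wedge-norms and the determinant are bounded above and below by constants depending only on $N,D$, hence contribute $O(\log D)\cdot(\text{finite sum of }1)$; but more to the point, by the same product-formula reasoning applied place by place these correction factors are themselves (up to such constants) powers of elements of $K^\times$ coming from the chosen change of basis, and they wash out of the global sum, leaving $\sum_v \frac{[K_v:\Q_v]}{[K:\Q]}\log|p_1\wedge\cdots\wedge p_l|_v = \sum_v \frac{[K_v:\Q_v]}{[K:\Q]}\log|q_1\wedge\cdots\wedge q_m|_v$, which is exactly the asserted equality $\mathcal{H}_{\mathrm{norm}}(X;D)=\sum_v\frac{[K_v:\Q_v]}{[K:\Q]}\log|q_1\wedge\cdots\wedge q_m|_v$.

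The main obstacle I anticipate is bookkeeping the normalizations so that the local correction factors really are \emph{global} (i.e. elements of $K^\times$ with a product formula), rather than merely bounded: one must be careful that the chosen basis completion $p_{l+1},\ldots,p_{l+m}$ and the induced dual basis are defined over $K$ (not just over $\overline{\Q}$ or over each completion), and that the sup-norm-of-minors normalization used in the definition of $|\cdot|_v$ is compatible at non-archimedean places where there is no Gram matrix. Once the duality isomorphism is written $K$-rationally and the standard-basis coordinates are tracked through it, the argument is a direct application of the product formula, exactly as in the self-duality arguments already used by Philippon and Sombra; indeed this proposition is essentially \cite[Remarque 2.3 or its analogue]{PS} and I would cite their computation for the routine part while spelling out the exterior-algebra identity above.
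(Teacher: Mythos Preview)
The paper does not give a proof; it simply cites \cite[Proposition 2.3]{PS}. Your approach---exterior-algebra duality between a subspace and its annihilator, followed by the product formula---is exactly the right idea and is presumably what Philippon and Sombra do.

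That said, you are making it harder than it is. You speak of the relation
\[
|p_1\wedge\cdots\wedge p_l|_v\cdot|q_1\wedge\cdots\wedge q_m|_v \;\approx\; |\det(p_1,\ldots,p_{l+m})|_v
\]
holding ``up to a factor bounded by a constant depending only on $N$ and $D$,'' and then worry about whether these local correction factors are global elements of $K^\times$. In fact no correction factors arise at all. The classical Pl\"ucker duality says: if $P$ is the $l\times n$ coordinate matrix of a basis of $I_D$ (with $n=\binom{D+N}{N}$) and $Q$ is the $m\times n$ coordinate matrix of any basis of $\mathrm{Ann}(I_D)=\ker P$, then there is a single scalar $\lambda\in K^\times$ such that for every $l$-subset $J\subset\N_D^{N+1}$,
\[
\det(P_J)=\pm\,\lambda\,\det(Q_{J^c}).
\]
(This is the statement that contraction with the standard volume form $\bigwedge^l K^n\to\bigwedge^m(K^n)^\vee$ sends the line through $p_1\wedge\cdots\wedge p_l$ to the line through $q_1\wedge\cdots\wedge q_m$.) Taking the sup over $J$ gives the \emph{exact} equality
\[
|p_1\wedge\cdots\wedge p_l|_v=|\lambda|_v\,|q_1\wedge\cdots\wedge q_m|_v
\]
at every place $v$, archimedean or not, with the same $\lambda\in K^\times$. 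Summing with the weights $\frac{[K_v:\Q_v]}{[K:\Q]}$ and applying the product formula to $\lambda$ finishes the proof in one line. There is no need for Gram/Hadamard estimates, orthonormal complements, or binomial comparison constants; your anticipated ``main obstacle'' disappears once you state the Pl\"ucker identity precisely.
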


\begin{proof}
See \cite[Proposition 2.3]{PS}.
\end{proof}

For any $k\in \N_{\geq 1}\cup \{\infty\}$, we consider the following 
arithmetic function,
\begin{equation}\label{w1}
\begin{split}
\mathcal{H}_{\mathrm{arith}}(X;D,k):=&\sum_{v\in 
M_K^\infty}
\frac{[K_v:\Q_v]}{[K:\Q]}\log \|p_1\wedge \cdots \wedge p_l  \|_{k,v}
\\
&+\sum_{v\in M_K\setminus M_K^\infty}\frac{[K_v:\Q_v]}{[K:\Q]}
\log|p_1\wedge \cdots \wedge p_l|_v+\frac{1}{2}\log(\gamma(N,D,k)),
\end{split}
\end{equation}
where $p_1,\ldots,p_l$ is a $K$-basis of $I_D$ and
\begin{equation}\label{w2}
\gamma(N;D,k):=\prod_{a\in \N_D^{N+1}}\langle a,a\rangle_k^{-1}.
\end{equation}
Notice that for $k=1$, $\mathcal{H}_{\mathrm{arith}}(X;\cdot,1)$
corresponds, up to a constant, to the arithmetic
function $\h_{\mathrm{arith}}(X;\cdot)$ considered in \cite[p. 346]{PS}.\\

Similarly to $\h_{\mathrm{norm}}$, the function 
$\h_{\mathrm{arith}}$ admits a dual formulation.  The scalar product $\langle \cdot,\cdot\rangle_k$ induces the following linear isomorphism
\[
\eta_k:\C[x_0,\ldots,x_N]\rightarrow \C[x_0,\ldots,x_N]^{\vee},\quad
f\mapsto \langle \cdot, f\rangle_k.
\]
Thus $\C[x_0,\ldots,x_N]^{\vee}$ can be endowed with the dual
scalar product, given as follows
\[
\langle \eta_k(f),\eta_k(g)\rangle_k:=\langle f,g\rangle_k, \quad
\forall f,g\in \C[x_0,\ldots,x_N]_D.
\]
We can check easily that, for any $k\in \N\cup \{\infty\}$ we have
 $\|\theta \|_k':=\sup_{g\in \C[x_0,\ldots,x_N]\setminus\{0\}}\frac{|\theta
(g)|}{\|g\|_k}=\|f\|_k$ 
where $f\in \C[x_0,\ldots,x_N]$ is 
such that $\theta=\eta_k(f)$. Then, ${\|\theta\|_k'}^2=\langle \theta,
\theta\rangle_k$ for any $\theta \in \C[x_0,\ldots,x_N]^{\vee}$. 
It follows that,
\begin{equation}\label{w4}
\langle \theta,\zeta\rangle_k=\sum_{b}\langle x^b,x^b\rangle_k^{-1} \theta_b\overline{\zeta}_b.
\end{equation}
This product extends to $\wedge^m\bigl(\C[x_0,\ldots,x_N]_D^{\vee}
\bigr)$ as follows
\[
\langle \theta_1\wedge \cdots \wedge \theta_m,\zeta_1\wedge
\cdots \wedge \zeta_m\rangle_k:=\det(\langle \theta_i,\zeta_j\rangle_k)_{1\leq i,j\leq m}.
\]

\begin{proposition}\label{w5}
Let $q_1,\ldots,q_m\in K[x_0,\ldots,x_N]_D^{ \vee}$ be a $K$-basis
of $\mathrm{Ann}(I_D)$, then
\[
\begin{split}
\mathcal{H}_{\mathrm{arith}}(X;D,k)=&\sum_{v\in M_K^\infty}\frac{[K_v:\Q_v]}{[K:\Q]}\log \|q_1\wedge \cdots \wedge q_m \|_{k,v}^{\vee}\\
&+\sum_{v\in M_K\setminus M_K^\infty}\frac{[K_v:\Q_v]}{[K:\Q]}
\log|q_1\wedge \cdots \wedge q_m|_v.
\end{split}
\]
\end{proposition}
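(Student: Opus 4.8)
The plan is to follow the same route as Proposition~\ref{w10}: compare the two sides place by place and then use the product formula. Fix the monomial basis $u_1=x^{a_1},\dots,u_M=x^{a_M}$ of $V:=K[x_0,\dots,x_N]_D$, with $M=\binom{D+N}{N}$, put $g_i:=\langle u_i,u_i\rangle_k>0$, and write $\gamma:=\gamma(N;D,k)=\prod_{i=1}^{M}g_i^{-1}$. The first input is that the $u_i$ are orthogonal for $\langle\cdot,\cdot\rangle_k$: the metric $h_k$ and the form $\Omega_k$ are invariant under the compact torus acting by $[x_0:\dots:x_N]\mapsto[\zeta_0x_0:\dots:\zeta_Nx_N]$ with $|\zeta_i|=1$ (for $\Omega_k$ one uses $\m_k=\tfrac1k[k]^{\ast}\m$ and the equivariance of $[k]$), so $\langle x^a,x^b\rangle_k$ is transformed by the character $\zeta\mapsto\zeta^{a-b}$ and hence vanishes for $a\neq b$ — this is the general-$k$ version of the computation already recorded for $k=\infty$. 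Consequently the $l$-vectors $\bigwedge_{i\in S}u_i$ (over $l$-element subsets $S\subseteq\{1,\dots,M\}$, in increasing order) are pairwise orthogonal in $\bigwedge^{l}V$ with $\|\bigwedge_{i\in S}u_i\|_k^2=\prod_{i\in S}g_i$; dually, by \eqref{w4} the dual basis $(u_i^{\vee})_i$ satisfies $\langle u_i^{\vee},u_j^{\vee}\rangle_k=g_i^{-1}\delta_{ij}$, so the $m$-vectors $\bigwedge_{i\in T}u_i^{\vee}$ ($|T|=m$) are pairwise orthogonal in $\bigwedge^{m}V^{\vee}$ with squared norm $\prod_{i\in T}g_i^{-1}$.

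Next I would use the linear-algebra fact underlying \cite[Proposition~2.3]{PS}: writing $P_S$ for the coefficient of $\bigwedge_{i\in S}u_i$ in $p_1\wedge\cdots\wedge p_l$ and $Q_T$ for the coefficient of $\bigwedge_{i\in T}u_i^{\vee}$ in $q_1\wedge\cdots\wedge q_m$, the multivectors $p_1\wedge\cdots\wedge p_l$ and $q_1\wedge\cdots\wedge q_m$ are Pl\"ucker-dual, i.e.\ there exist $\lambda\in K^{\times}$ and signs $\varepsilon_S\in\{\pm1\}$ with $P_S=\lambda\,\varepsilon_S\,Q_{S^c}$ for every $l$-subset $S$ (here $S^c$ is the complement, of size $m$, and $\lambda\in K^{\times}$ because all $P_S,Q_T$ lie in $K$). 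Taking sup-norms this yields $|p_1\wedge\cdots\wedge p_l|_v=|\lambda|_v\,|q_1\wedge\cdots\wedge q_m|_v$ for all $v\in M_K$. On the archimedean side, for $v\in M_K^{\infty}$ I would expand $\|p_1\wedge\cdots\wedge p_l\|_{k,v}^2=\sum_S|\sigma_v(P_S)|^2\prod_{i\in S}g_i$ and $\bigl(\|q_1\wedge\cdots\wedge q_m\|^{\vee}_{k,v}\bigr)^2=\sum_T|\sigma_v(Q_T)|^2\prod_{i\in T}g_i^{-1}$ in the orthogonal bases above, substitute $P_S=\lambda\varepsilon_SQ_{S^c}$, use $\prod_{i\in S}g_i\cdot\prod_{i\in S^c}g_i=\prod_{i=1}^{M}g_i=\gamma^{-1}$, and reindex the resulting sum by $T=S^c$; this gives $\|p_1\wedge\cdots\wedge p_l\|_{k,v}^2=|\lambda|_v^2\,\gamma^{-1}\,\bigl(\|q_1\wedge\cdots\wedge q_m\|^{\vee}_{k,v}\bigr)^2$, that is
\[
\log\|p_1\wedge\cdots\wedge p_l\|_{k,v}=\log|\lambda|_v-\tfrac12\log\gamma+\log\|q_1\wedge\cdots\wedge q_m\|^{\vee}_{k,v}.
\]

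Finally I would substitute these into the definition \eqref{w1}. With $c_v:=[K_v:\Q_v]/[K:\Q]$, the archimedean part of \eqref{w1} becomes $\sum_{v\in M_K^{\infty}}c_v\log|\lambda|_v-\tfrac12\log\gamma\cdot\sum_{v\in M_K^{\infty}}c_v+\sum_{v\in M_K^{\infty}}c_v\log\|q_1\wedge\cdots\wedge q_m\|^{\vee}_{k,v}$, the non-archimedean part becomes $\sum_{v\notin M_K^{\infty}}c_v\log|\lambda|_v+\sum_{v\notin M_K^{\infty}}c_v\log|q_1\wedge\cdots\wedge q_m|_v$, and there is in addition the summand $\tfrac12\log\gamma$. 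Since $\sum_{v\in M_K^{\infty}}c_v=1$ (because $\sum_{v\in M_K^{\infty}}[K_v:\Q_v]=[K:\Q]$), the two $\gamma$-terms cancel, and $\sum_{v\in M_K}c_v\log|\lambda|_v=0$ by the product formula applied to $\lambda\in K^{\times}$. What remains is exactly $\sum_{v\in M_K^{\infty}}c_v\log\|q_1\wedge\cdots\wedge q_m\|^{\vee}_{k,v}+\sum_{v\notin M_K^{\infty}}c_v\log|q_1\wedge\cdots\wedge q_m|_v$, which is the asserted identity. I expect the only genuinely non-formal step to be the Pl\"ucker-duality input of the second paragraph; as it is precisely the mechanism behind \cite[Proposition~2.3]{PS}, it can be invoked directly, so in practice the one point that needs a short separate verification is the orthogonality of the monomials for $\langle\cdot,\cdot\rangle_k$ for every $k\in\N_{\geq1}\cup\{\infty\}$.
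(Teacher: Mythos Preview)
Your argument is correct and is precisely the detailed version of what the paper alludes to by writing ``similar to \cite[Proposition 2.5]{PS}'': the orthogonality of the monomials for $\langle\cdot,\cdot\rangle_k$ (already implicit in \eqref{w4}), the Pl\"ucker duality $P_S=\lambda\varepsilon_S Q_{S^c}$, the resulting identity $\|p_1\wedge\cdots\wedge p_l\|_{k,v}^2=|\lambda|_v^2\,\gamma^{-1}\bigl(\|q_1\wedge\cdots\wedge q_m\|_{k,v}^{\vee}\bigr)^2$, and the cancellation via $\sum_{v\in M_K^\infty}c_v=1$ and the product formula are exactly the mechanism of the cited proof, so there is no genuine difference in approach.
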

\begin{proof}
The proof is similar to \cite[Proposition 2.5]{PS}.
\end{proof}

\begin{lemma}\label{w6}
There exists $D_1$ such that for any $D\geq D_1$ and any $k\in \N$,
we have
\[
\mathcal{H}_{\mathrm{arith}}(X;D,k)=\widehat{\deg}(\overline{\Gamma(\Si,\mathcal{O}(D)_{|_\Si})}_k)-
\frac{1}{2}\h_{\mathrm{geom}}(X;D)\log \binom{D+N}{N}.
\]

\end{lemma}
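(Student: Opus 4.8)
The plan is to compute $\widehat{\deg}(\overline{\Gamma(\Si,\mathcal{O}(D)_{|_\Si})}_k)$ directly from the definition of the arithmetic degree and match it term by term with the expression \eqref{w1} for $\mathcal{H}_{\mathrm{arith}}(X;D,k)$, using the dual formulations in Propositions \ref{w10} and \ref{w5} to bridge the two. First I would fix $D \geq D_0$ so that the restriction map $\pi$ is surjective, and fix a $K$-basis $f_1,\ldots,f_s$ of $M\otimes_{\mathcal{O}_K}K$ where $s = \h_{\mathrm{geom}}(X;D)$. The finite (non-archimedean) part of $\widehat{\deg}$, namely $\frac{1}{[K:\Q]}\log\mathrm{Card}(\bigwedge^s M/(f_1\wedge\cdots\wedge f_s))$ together with the non-archimedean places in the sum, is — up to the product formula — exactly the sum $\sum_{v\in M_K\setminus M_K^\infty}\frac{[K_v:\Q_v]}{[K:\Q]}\log|q_1\wedge\cdots\wedge q_m|_v$ appearing in Proposition \ref{w5} (the lattice-index term repackages the finite contribution of a chosen dual basis of $\mathrm{Ann}(I_D)$); here one uses that at a finite place the quotient norm coincides with the sup-norm on coefficients, so no extra constant is introduced. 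This is a standard but slightly delicate bookkeeping step, and it may require enlarging $D_0$ to some $D_1$ to guarantee that the relevant integral models behave well.

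Next I would handle the archimedean places. At a place $v\in M_K^\infty$, the fibre $\Gamma(\Si,\mathcal{O}(D)_{|_\Si})_{\si_v}$ is $\C[x_0,\ldots,x_N]_D/(I_D)_{\si_v}$ equipped with the quotient norm $\vc_{k,v,\mathrm{quot}}$ induced by $\vc_{k,v}$ on $\C[x_0,\ldots,x_N]_D$. The key linear-algebra fact is that the quotient norm on $V/W$ is canonically the dual of the restriction of the dual norm to $\mathrm{Ann}(W)\subset V^\vee$; hence the covolume of $\Gamma(\Si,\mathcal{O}(D)_{|_\Si})_{\si_v}$ with respect to $\vc_{k,v,\mathrm{quot}}$ equals the covolume of $\mathrm{Ann}(I_D)$ with respect to $\vc_{k,v}^\vee$, i.e. $\log\|q_1\wedge\cdots\wedge q_m\|_{k,v}^\vee$ in the notation of Proposition \ref{w5}. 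Summing over $v\in M_K^\infty$ and invoking Proposition \ref{w5} gives
\[
\sum_{v\in M_K^\infty}\frac{[K_v:\Q_v]}{[K:\Q]}\log\|\text{(archimedean covolume)}\|
= \mathcal{H}_{\mathrm{arith}}(X;D,k) - \sum_{v\in M_K\setminus M_K^\infty}(\cdots) - \tfrac12\log\gamma(N;D,k),
\]
so after combining with the non-archimedean identification of the previous paragraph, everything collapses to $\widehat{\deg}(\overline{\Gamma(\Si,\mathcal{O}(D)_{|_\Si})}_k) = \mathcal{H}_{\mathrm{arith}}(X;D,k) + \tfrac12\log\gamma(N;D,k) - (\text{correction})$.

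It remains to identify the correction term $\tfrac12\log\gamma(N;D,k) - (\text{correction})$ with $-\tfrac12\h_{\mathrm{geom}}(X;D)\log\binom{D+N}{N}$. This comes from comparing the normalization implicit in $|q_1\wedge\cdots\wedge q_m|_v$ (sup-norm of coefficients relative to the standard basis of $\wedge^m\C[x_0,\ldots,x_N]_D^\vee$) with the $\vc_{k,v}^\vee$-norm of the same wedge: by \eqref{w4} the standard monomial basis is orthogonal for $\langle\cdot,\cdot\rangle_k$ with $\|x^b\|_k^2 = \langle x^b,x^b\rangle_k$, and $\gamma(N;D,k)=\prod_a\langle a,a\rangle_k^{-1}$ collects exactly the product of these squared norms over all $\binom{D+N}{N}$ monomials; passing to a rank-$m$ minor and taking logarithms, a counting argument (each of the $m = \h_{\mathrm{geom}}(X;D)$ basis vectors contributes, and the factor $\log\binom{D+N}{N}$ enters through the comparison between the sup-norm and the $\ell^2$-norm on the full space) produces precisely the stated term. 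The main obstacle I expect is this last normalization computation: keeping careful track of the $\gamma$-factor, the sup-norm versus Hermitian-norm discrepancy, and the exact power of $\binom{D+N}{N}$, all while making sure the constant is absorbed correctly and the surjectivity threshold $D_1$ is chosen uniformly in $k$ — the uniformity in $k$ is what forces one to work with the explicit formula \eqref{w2} rather than an abstract comparison-of-norms estimate.
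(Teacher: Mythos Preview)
Your route is genuinely different from the paper's. The paper does \emph{not} compute $\widehat{\deg}(\overline{\Gamma(\Si,\mathcal{O}(D)_{|_\Si})}_k)$ directly via the quotient/annihilator duality; instead it uses additivity of the arithmetic degree in the short exact sequence
\[
0\to \overline{\Gamma(\p^N_{\mathcal{O}_K},\mathcal{I}\mathcal{O}(D))}_k\to \overline{\Gamma(\p^N_{\mathcal{O}_K},\mathcal{O}(D))}_k\to \overline{\Gamma(\Si,\mathcal{O}(D)_{|_\Si})}_k\to 0
\]
(valid for $D\ge D_1$ independently of $k$, since this is a purely cohomological condition), computes $\widehat{\deg}$ of the full space of sections explicitly as $\tfrac12\log\gamma(N;D,k)+\tfrac12\binom{D+N}{N}\log\binom{D+N}{N}$, and then subtracts $\widehat{\deg}$ of the ideal piece, expressed via the basis $p_1,\dots,p_l$ of $I_D$. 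Everything stays on the primal side; the term $\tfrac12\h_{\mathrm{geom}}(X;D)\log\binom{D+N}{N}$ drops out of this subtraction, not from any sup/$\ell^2$ comparison.

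Your dual strategy is in principle workable, but the proposal has a real gap exactly where you flag it. Once you invoke Proposition~\ref{w5}, the $\gamma$-factor is already gone from $\mathcal{H}_{\mathrm{arith}}$; so your displayed identity ``$\widehat{\deg}=\mathcal{H}_{\mathrm{arith}}+\tfrac12\log\gamma-(\text{correction})$'' cannot be right as written --- there is no $\gamma$ left to cancel. More seriously, your explanation that $\log\binom{D+N}{N}$ ``enters through the comparison between the sup-norm and the $\ell^2$-norm'' cannot produce an \emph{equality}: sup/$\ell^2$ comparisons are inequalities. If you carry out the duality carefully (quotient metric $\leftrightarrow$ restriction of dual metric to $\mathrm{Ann}(I_D)$, plus product formula for the pairing determinant $\det(q_j(f_i))$), you will find that the archimedean and non-archimedean pieces match those in Proposition~\ref{w5} exactly, with no leftover constant; the $\tfrac12 m\log\binom{D+N}{N}$ must therefore come from the discrepancy between the integral structure of $M=\Gamma(\Si,\mathcal{O}(D)_{|_\Si})$ and the naive lattice inside the quotient, and that is precisely what the paper's exact-sequence computation \eqref{w7}--\eqref{w8} (following \cite[Lemme~2.6]{PS}) is designed to track. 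Your sketch does not account for this, and without it the argument does not close.
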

\begin{proof}
The proof is similar to \cite[lemme 2.6]{PS}. Let $\mathcal{I}$
be the ideal sheaf of $\Si$ and $\Gamma(\p^n_{\mathcal{O}_K},
\mathcal{I}\mathcal{O}(D))$ the $\mathcal{O}_K$-module of global  
sections of $\mathcal{I}\mathcal{O}(D)$, endowed with the scalar 
products  induced by the scalar product $\langle\cdot,\cdot\rangle_k$. We claim that there exists $D_1$
an integer which does not depend on $k$ such that for any
$D\geq D_1$, we have
\[
\widehat{\deg}(\overline{\Gamma(\Si,\mathcal{O}(D)_{|_{\Si}})}_k)=
\widehat{\deg}(\overline{\Gamma(\p^N_{\mathcal{O}_K},\mathcal{O}
(D))}_k)-\widehat{\deg}(\overline{\Gamma(\p^N_{\mathcal{O}_K},\mathcal{I}\mathcal{O}
(D))}_k).
\]

Indeed,we can find $D_1\in \N$ such that $\forall D\geq D_1$, 
the following  sequence is exact 
\[
0\rightarrow {\Gamma(\p^N_{\mathcal{O}_K},\mathcal{I}
\mathcal{O}
(D)_{|_{\Si}})}\rightarrow {\Gamma(\p^N_{\mathcal{O}_K},
\mathcal{O}
(D))}\rightarrow {\Gamma(\Si,\mathcal{O}(D)_{|_{\Si}})}\rightarrow 0,
\]
and then by \cite[lemme 2.3.6]{Ran1}, the  following sequence of hermitian
 $\mathcal{O}_K$-modules is exact
\[
0\rightarrow \overline{\Gamma(\p^N_{\mathcal{O}_K},\mathcal{I}
\mathcal{O}
(D)_{|_{\Si}})}_k\rightarrow \overline{\Gamma(\p^N_{\mathcal{O}_K},
\mathcal{O}
(D))}_k\rightarrow \overline{\Gamma(\Si,\mathcal{O}(D)_{|_{\Si}})}_k\rightarrow 0,
\]
where the metrics of $\overline{\Gamma(\p^N_{\mathcal{O}_K},
\mathcal{I}\mathcal{O}(D)_{|_{\Si}})}_k$ and
$ \overline{\Gamma(\Si,\mathcal{O}(D)_{|_{\Si}})}_k$
 are induced by the metric of $\overline{\Gamma(\p^N_{\mathcal{O}_K},
\mathcal{O}
(D))}_k$.

We have
\begin{equation}\label{w7}
\widehat{\deg}(\overline{\Gamma(\p^N_{\mathcal{O}_K},
\mathcal{O}
(D))}_k)=\frac{1}{2}\log (\gamma(N;D,k))+\frac{1}{2}\binom{D+N}{N}
\log \binom{N+D}{N}.
\end{equation}
As in the proof of \cite[Lemme 2.6]{PS}, and keeping the same
notations we have,
\begin{equation}\label{w8}
\begin{split}
\widehat{\deg}(&\overline{\Gamma(\Si,\mathcal{O}(D)_{|_{\Si}})}_k)=
\frac{1}{2}\log (\gamma(N;D,k))+\frac{1}{2}\h_{\mathrm{geom}}(X;D)
\log \binom{N+D}{N}\\
&+\sum_{v\in M_K^\infty}\frac{[K_v:\Q_v]}{[K:\Q]}\log \|
p_1\wedge \cdots \wedge p_l \|_{k,v}^{\vee}-\frac{1}{[K:\Q]}
\log \mathrm{Card}\Bigl(\bigwedge^l (I_{\mathcal{O}_K})/
(p_1\wedge \cdots \wedge p_l)   \Bigr).
\end{split}
\end{equation}
The last term in \eqref{w8} does not depend on the metric. It is
computed in
\cite[p. 349]{PS}; we have
\[
\frac{1}{[K:\Q]}
\log \mathrm{Card}\Bigl(\bigwedge^l (I_{\mathcal{O}_K})/
(p_1\wedge \cdots \wedge p_l)   \Bigr)=-\sum_{v\in M_K\setminus M_K^\infty}\frac{[K_v:\Q_v]}{[K:\Q]}
\log|p_1\wedge \cdots \wedge p_l|_v.
\]
This ends the proof of the lemma.
\end{proof}

By \cite[Th\'eor\`eme A]{Ran}, we have
\begin{equation}\label{w9}
\widehat{\deg}(\overline{\Gamma(\Si,\mathcal{O}(D)_{|_{\Si}})}_k)=
\frac{h_{\overline{\mathcal{O}(
1)}_k}(X)}{(n+1)!}D^{n+1}+o(D^{n+1})\quad \forall D\gg 1,
\end{equation}
where $h_{\overline{\mathcal{O}(
1)}_k}(X)$ denotes  the height of the Zariski closure of
$X$ in $\p^N_{\mathcal{O}_K}$ with respect to $\overline{
\mathcal{O}(1)}_k$. Since $\frac{1}{2}\h_{\mathrm{geom}}(X;D)
\log\binom{D+N}{N}=o(D^{n+1})$ for $D\gg 1.$ Then, by Lemma
\eqref{w6}, we get
\begin{equation}\label{s3}
\h_{\mathrm{arith}}(X;D,k)=\frac{h_{\overline{\mathcal{O}(
1)}_k}(X)}{(n+1)!}D^{n+1}+o(D^{n+1})\quad \forall D\gg 1.
\end{equation}

Let $q_1,\ldots,q_m\in K[x_0,\ldots,x_N]^{\vee}$ 
be a $K$-basis of $\mathrm{Ann}(I_D)$.  
For any finite subset  $M$ in $ \N_D^{N+1}$ of cardinal $m$, we set
 $q_M:=(q_{jb})_{1\leq j\leq m, b\in M}\in K^{m\times m}$ where the $q_{jb}$ are such that
 $q_j=\sum_{b\in \N_D^{N+1}}q_{jb}(x^b)^{\vee}$.
 For any $v\in M_K^\infty$,
we have
\begin{equation}
\begin{split}
|q_1\wedge \cdots \wedge q_m|_v&=\max\{|\det(q_M)|_v: M \subset 
\N_D^{N+1}, \mathrm{Card}(M)=m \}\\
&\leq
 \Bigl( 
 \sum_{M; \mathrm{Card}(M)=m}\bigl(
 \prod_{b\in M}\langle b,b\rangle_{v,k}^{-1}\bigr) 
 |\det(q_M)|_v^2\Bigr)^{\frac{1}{2}},
\end{split}
\end{equation}
(We use the following inequality:  $\langle x^a,x^a\rangle_k=\int_{\p^N(\C)}h_{\overline{\mathcal{O}(D)
}_k}(x^a,x^a)\Omega_k\leq 1$ for any $a\in \N_D^{N+1}$, which follows from $h_{\overline{\mathcal{O}(D)
}_k}(x^a,x^a) \leq h_{\overline{\mathcal{O}(D)
}_\infty}(x^a,x^a)\leq 1$ on $\p^N(\C)$, and the 
fact that $\Omega_k$ is 
positive on $\p^n(\C)$ and 
$\int_{\p^N(\C)}\Omega_k=1$).  
 
  Then,
 \begin{equation}
 |q_1\wedge \cdots \wedge q_m|_v\leq \|q_1\wedge \cdots 
 \wedge q_m\|_{k,v}^{\vee}\quad \forall k\in \N.
 \end{equation}
By Propositions \eqref{w10} and \eqref{w5} we get,
\begin{equation}\label{s1}
\h_{\mathrm{norm}}(X;D)\leq \h_{\mathrm{arith}}(X;D,k)\quad\forall k\in \N.
\end{equation}
By \eqref{s3}, the previous inequality gives
\begin{equation}\label{t2}
\limsup_{D\rightarrow \infty}\frac{(n+1)!}{D^{n+1}} \h_{\mathrm{norm}}
(X;D)\leq h_{\overline{\mathcal{O}(1)}_k}(X)\quad \forall k\in \N.
\end{equation}

We know that $(h_k)_{k\in \N}$ converges uniformly to $h_\infty$ 
on $\p^N(\C)$. 
Let $0<\eps<1$, which will be fixed in the sequel, then there exists $k_0\in \N$ such that for 
for any $k\geq k_0$, we have
\[
(1-\eps)^{2D}\leq \frac{(\max(|x_0|_v,\ldots,|x_N|_v))^{2D}}{
(|x_0|_v^{2k}+\cdots+|x_N|_v^{2k})^{\frac{D}{k}}}\leq (1+\eps)^{2D}
\quad \forall x\in \p^N(\C),\forall  D\in \N.
\]
Thus, for any $k\geq k
_0$, $D\in \N_{\geq 1}$ and  $a\in \N_D^{N+1}$ we get
\begin{equation}\label{m3}
\langle x^a,x^a\rangle_k\geq (1-\eps )^{2D}\int_{\p^N(\C)}
h_\infty^{\otimes D}(x^a,x^a)\m_k^N.
\end{equation}

We have 
\begin{align*}
\int_{\p^N(\C)}
h_\infty^{\otimes D}(x^a,x^a)\m_k^N&= \int_{\C^N}\frac{|z^{2a}|}{\max(1,|z_1|,\ldots,|z_N|)^{2D}}\frac{k^N \prod_{i=1}^N|z_i|^{2(k-1)}\prod_{i=1}^Ndz_i\wedge d\z_i}{(1+\sum_{i=1}^N|z_i^{2k}|)^{N+1}}\\
&=2^N\int_{(\R^+)^N}\frac{k^N r^{a+k-1}}{\max(1,r_1,\ldots, r_N)^{2D}}\frac{\prod_{i=1}^Ndr_i}{(1+\sum_{i=1}^N r_i^k)^{N+1}}\\
&=2^N\int_{(\R^+)^N}\frac{r^{\frac{a}{k}}}{\max_{i}(1,r_1,\ldots, r_N)^{\frac{D}{k}}}\frac{\prod_{i=1}^Ndr_i}{(1+\sum_{i=1}^N r_i^k)^{N+1}}\\
&=2^N\sum_{j=0}^N\int_{E_j}\frac{r^{\frac{a}{k}}}{\max_{i}(1,r_1,\ldots, r_N)^{\frac{D}{k}}}\frac{\prod_{i=1}^Ndr_i}{(1+\sum_{i=1}^N r_i)^{N+1}},
\end{align*}
where $E_j:=\{x\in (\R^+)^N|\, x_j\geq 1, x_l\leq x_j\, \text{for}\, l=1,\ldots,N\}$ for $j=1,\ldots,N$  and $E:=\{x\in (\R^+)^N| \,
x_l
\leq 1,\, \text{for}\, l=1,\ldots,N\}$.  Using the following application
\[
(\R^{\ast +})^N\rightarrow (\R^{\ast +})^N,\quad 
x=(x_1,\ldots,x_N)\mapsto (\frac{x_1}{x_j},\ldots,\frac{x_{j-1}}{x_j},\frac{1}{x_j},\ldots,\frac{x_n}{x_j})
\]
for $j=1,\ldots,N$, we can show that there exists $b^{(j)}=
(b_1^{(j)},\ldots,b_N^{(j)})
\in \N^N$ such that
\begin{equation}
\int_{E_j}\frac{r^{\frac{a}{k}}}{\max_{i}(1,r_1,\ldots, r_N)^{\frac{D}{k}}}\frac{\prod_{i=1}^Ndr_i}{
(1+\sum_{i=1}^Nr_i)^{N+1}}
=\int_{E}r^{\frac{b^{(j)}}{k}}\frac{\prod_{i=1}^Ndr_i}{
(1+\sum_{i=1}^Nr_i)^{N+1}}
\end{equation}
We set $b^{(0)}:=a$. Then, 
\begin{equation}\label{m2}
\int_{\p^N(\C)}
h_\infty^D(x^a,x^a)\m_k^N=2^N\sum_{j=0}^N\int_{E}r^{\frac{b^{(j)}}{k}}\frac{\prod_{i=1}^Ndr_i}{
(1+\sum_{i=1}^Nr_i)^{N+1}}
\end{equation}
  
Let $0<\delta<1$, and set $E_\delta:=\{x\in E| x_l\geq \delta\,\, \text{for}\,\,
 l=1,\ldots,N\}$. From \eqref{m3} and \eqref{m2}, we 
 obtain
 \[
 \langle x^a,x^a\rangle_k\geq (1-\eps )^{2D}2^N \sum_{j=0}^N
 \int_{E_\delta} r^{\frac{b^{(j)}}{k}}\frac{\prod_{i=1}^Ndr_i}{
(1+\sum_{i=1}^Nr_i)^{N+1}}\geq (1-\eps )^{2D}2^N (N+1)
\delta^{\frac{D}{k}} \mu_\delta,
 \]
where $\mu_\delta:=\int_{E_\delta}\frac{\prod_{i=1}^Ndr_i}{
(1+\sum_{i=1}^Nr_i)^{N+1}}$.

Thus,
\begin{equation}\label{m5}
\langle x^a,x^a\rangle_k^{-1}\leq 
(1-\eps )^{-2D}
\delta^{-\frac{D}{k}} {\mu_\delta}^{-1}\quad
\forall k\geq k_0, \forall D\in \N_{\geq 1}, \forall a\in 
\N_D^{N+1}.
\end{equation}

Then, for any $k\geq k_0$ and $D\geq D_1,$
\begin{equation}
\begin{split}
\|q_1\wedge \cdots 
 \wedge q_m\|_{k,v}^{\vee}&\leq 
 \Bigl(\sum_{M;\mathrm{Card}(M)=m } \bigl(
 \prod_{b\in M}\langle b,b\rangle_{v,k}^{-1}\bigr)
  \Bigr)^\frac{1}{2} |q_1\wedge \cdots \wedge q_m|_v\\
  &\leq \mathrm{Card}\{M\subset \N_D^{N+1}| \mathrm{Card}
  (M)=m \}^
  \frac{1}{2} (1-\eps)^{-mD}
\delta^{-m\frac{D}{k}} \mu_\delta^{-m}
  |q_1\wedge \cdots \wedge q_m|_v\quad \text{by}\, 
  \eqref{m5}\\
  &\leq \mathrm{Card}(\N_D^{N+1})(1-\eps)^{-mD}\delta^{-m
  \frac{D}{k}} \mu_\delta^{-m}
  |q_1\wedge \cdots \wedge q_m|_v\\
  &=\binom{N+D}{N}^\frac{1}{2}(1-\eps)^{-mD}\delta^{-m
  \frac{D}{k}} \mu_\delta^{-m}
  |q_1\wedge \cdots \wedge q_m|_v.
\end{split}
\end{equation}

Therefore,
\begin{equation}\label{s2}
\begin{split}
\h_{\mathrm{arith}}(X;D,k)\leq & 
\h_{\mathrm{norm}}(X;D)+\frac{1}{2}\log\binom{N+D}{N} -
D\h_{\mathrm{geom}}(X;D)\log (1-\eps)\\
&-
\frac{D\h_{\mathrm{geom}}(X;D) }{k} \log \delta
-\h_{\mathrm{geom}}(X;D) \log \mu_\delta.
\end{split}
\end{equation}

By \eqref{s3}, we obtain that
\begin{equation}\label{t1}
h_{\overline{\mathcal{O}(1)}_k}(X)\leq \liminf_{D\rightarrow 
\infty}
\frac{(n+1)!}{D^{n+1}} \h_{\mathrm{norm}}
(X;D)+O(\eps)+\frac{\log \delta}{k} O(1), \quad \forall k
\geq k_0.
\end{equation}

Gathering  \eqref{t2} and \eqref{t1}, we conclude that 
for any $0<\eps<1$, there exists $k_0\in \N$ such that
\begin{equation}
 \limsup_{D\rightarrow \infty}
\frac{(n+1)!}{D^{n+1}} \h_{\mathrm{norm}}
(X;D)\leq h_{\overline{\mathcal{O}(1)}_k}(X)\leq \liminf_{D
\rightarrow \infty}
\frac{(n+1)!}{D^{n+1}} \h_{\mathrm{norm}}
(X;D)+O(\eps)+\frac{\log \delta}{k} O(1), \quad \forall k
\geq k_0.
\end{equation}

Since
$\lim_{k\rightarrow \infty}h_{\overline{\mathcal{O}(1)}_k}(X)=h_{\overline{\mathcal{O}(1)}_\infty}(X)$ (see for instance 
\cite{Zhang}) and 
$h_{\overline{\mathcal{O}(1)}_\infty}(X)=\widehat{h}(X)$ 
(see \cite[p. 342]{PS})
we get
\begin{equation}\label{normcan}
\liminf_{D\rightarrow \infty}\frac{(n+1)!}{D^{n+1}} \h_{\mathrm{norm}}
(X;D)=\limsup_{
D\rightarrow \infty}\frac{(n+1)!}{D^{n+1}} \h_{\mathrm{norm}}(X;D)=\widehat{h}(X).
\end{equation}

Thus we proved  the following theorem
\begin{theorem}\label{m6}
Let $X\subset \p^N$ be a subvariety of dimension $n$ in 
$\p^N$. Then the normalized arithmetic Hilbert function 
associated to $X$ admits the following asymptotic expansion
\[
\mathcal{H}_{\mathrm{norm}}(X;D)=\frac{\widehat{h}(X)}{(n+1)!}D^{n+1}+
o(D^{n+1})\quad D\gg 1.
\]

\end{theorem}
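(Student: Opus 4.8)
The plan is to squeeze the normalized arithmetic Hilbert function $\h_{\mathrm{norm}}(X;D)$ between two families of ``arithmetic'' Hilbert functions $\h_{\mathrm{arith}}(X;D,k)$ attached to the smooth metrics $h_k$ on $\mathcal{O}(1)$, for which an arithmetic Hilbert--Samuel type asymptotic is already available, and then to let $k\to\infty$ so that the metrics $h_k$ degenerate to the canonical (Weil) metric $h_\infty$ whose associated height is the normalized height $\widehat{h}(X)$.

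First I would recall the dual description of $\h_{\mathrm{norm}}$ (Proposition \eqref{w10}) and of $\h_{\mathrm{arith}}$ (Proposition \eqref{w5}) in terms of a $K$-basis $q_1,\dots,q_m$ of $\mathrm{Ann}(I_D)$: both are sums over places of $M_K$ of logarithms of norms of $q_1\wedge\cdots\wedge q_m$, the only difference being that $\h_{\mathrm{norm}}$ uses the sup-norm at the archimedean places while $\h_{\mathrm{arith}}(X;D,k)$ uses the $L^2$-norm $\vc_{k,v}^\vee$ coming from $\Omega_k$ and $h_k$. The first comparison, $\h_{\mathrm{norm}}(X;D)\le \h_{\mathrm{arith}}(X;D,k)$ for all $k\in\N$ (inequality \eqref{s1}), follows from $|q_1\wedge\cdots\wedge q_m|_v\le \|q_1\wedge\cdots\wedge q_m\|_{k,v}^\vee$, which in turn rests on the elementary bound $\langle x^a,x^a\rangle_k\le 1$ (because $h_{\overline{\mathcal{O}(D)}_k}(x^a,x^a)\le h_{\overline{\mathcal{O}(D)}_\infty}(x^a,x^a)\le 1$ pointwise and $\Omega_k$ is a probability measure). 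Next I would invoke Lemma \eqref{w6}, which identifies $\h_{\mathrm{arith}}(X;D,k)$ with $\widehat{\deg}(\overline{\Gamma(\Si,\mathcal{O}(D)_{|_\Si})}_k)$ up to the negligible term $\tfrac12\h_{\mathrm{geom}}(X;D)\log\binom{D+N}{N}=o(D^{n+1})$, and then Randriambololona's arithmetic Hilbert--Samuel theorem (\cite[Th\'eor\`eme A]{Ran}) to get \eqref{s3}: $\h_{\mathrm{arith}}(X;D,k)=\frac{h_{\overline{\mathcal{O}(1)}_k}(X)}{(n+1)!}D^{n+1}+o(D^{n+1})$. Combined with \eqref{s1} this already yields the upper bound $\limsup_D \frac{(n+1)!}{D^{n+1}}\h_{\mathrm{norm}}(X;D)\le h_{\overline{\mathcal{O}(1)}_k}(X)$ for every $k$.

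The harder direction is the reverse inequality, i.e. bounding $\|q_1\wedge\cdots\wedge q_m\|_{k,v}^\vee$ from above by a controlled multiple of $|q_1\wedge\cdots\wedge q_m|_v$. Expanding the determinant defining the $L^2$ wedge-norm over all $m$-subsets $M\subset\N_D^{N+1}$, this reduces to an upper bound for the inverse monomial norms $\langle x^a,x^a\rangle_k^{-1}$. This is the main obstacle, and it is the technical heart of the argument: one must show $\langle x^a,x^a\rangle_k^{-1}\le (1-\eps)^{-2D}\delta^{-D/k}\mu_\delta^{-1}$ uniformly in $a\in\N_D^{N+1}$, for $k$ large and suitable small $\eps,\delta>0$. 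I would establish this by first using the uniform convergence $h_k\to h_\infty$ on $\p^N(\C)$ to replace $h_k$ by $h_\infty$ at the cost of a factor $(1-\eps)^{2D}$, then computing $\int_{\p^N(\C)} h_\infty^{\otimes D}(x^a,x^a)\,\m_k^N$ in affine coordinates and in polar coordinates $r_i=|z_i|$. Decomposing $(\R^+)^N$ according to which coordinate (or none) is largest, and applying the coordinate inversions $x\mapsto (x_1/x_j,\dots,1/x_j,\dots,x_N/x_j)$ which preserve the measure $\prod dr_i/(1+\sum r_i)^{N+1}$, each piece becomes $\int_E r^{b^{(j)}/k}\,\prod dr_i/(1+\sum r_i)^{N+1}$ over the unit cube $E$, with $b^{(j)}\in\N^N$; restricting to $E_\delta=\{x\in E: x_l\ge\delta\}$ gives the lower bound $(N+1)\delta^{D/k}\mu_\delta$ and hence the desired estimate \eqref{m5}. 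Feeding this into the wedge-norm expansion produces \eqref{s2}, and since $D\,\h_{\mathrm{geom}}(X;D)=o(D^{n+1})$ the $\eps$- and $\delta$-dependent error terms contribute only $O(\eps)+\frac{\log\delta}{k}O(1)$ to the asymptotic, yielding \eqref{t1}: $h_{\overline{\mathcal{O}(1)}_k}(X)\le \liminf_D \frac{(n+1)!}{D^{n+1}}\h_{\mathrm{norm}}(X;D)+O(\eps)+\frac{\log\delta}{k}O(1)$.

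Finally I would combine the two bounds \eqref{t2} and \eqref{t1} to sandwich $h_{\overline{\mathcal{O}(1)}_k}(X)$ between $\limsup$ and $\liminf$ of $\frac{(n+1)!}{D^{n+1}}\h_{\mathrm{norm}}(X;D)$ up to errors $O(\eps)+\frac{\log\delta}{k}O(1)$, then let $k\to\infty$ using the continuity of the height under uniform convergence of metrics, $h_{\overline{\mathcal{O}(1)}_k}(X)\to h_{\overline{\mathcal{O}(1)}_\infty}(X)$ (\cite{Zhang}), together with the identification $h_{\overline{\mathcal{O}(1)}_\infty}(X)=\widehat{h}(X)$ (\cite[p.~342]{PS}); since $\eps$ was arbitrary and $\frac{\log\delta}{k}\to 0$, the squeeze forces $\liminf = \limsup = \widehat{h}(X)$, which is exactly the claimed asymptotic expansion $\h_{\mathrm{norm}}(X;D)=\frac{\widehat{h}(X)}{(n+1)!}D^{n+1}+o(D^{n+1})$.
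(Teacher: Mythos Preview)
Your proposal reproduces the paper's proof essentially step for step: the same squeeze of $\h_{\mathrm{norm}}$ between the $\h_{\mathrm{arith}}(\cdot;k)$ via Propositions \eqref{w10} and \eqref{w5}, the same appeal to Lemma \eqref{w6} and Randriambololona's Hilbert--Samuel theorem for \eqref{s3}, the same monomial estimate \eqref{m5} obtained by passing to $h_\infty$, going to polar coordinates, decomposing $(\R^+)^N$ into the regions $E_j$, applying the inversions $x\mapsto(x_1/x_j,\dots,1/x_j,\dots)$, and restricting to $E_\delta$, and the same conclusion via $k\to\infty$ using \cite{Zhang} and \cite[p.~342]{PS}.

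One slip to correct: you justify the passage from \eqref{s2} to \eqref{t1} by asserting $D\,\h_{\mathrm{geom}}(X;D)=o(D^{n+1})$, which is false --- since $\h_{\mathrm{geom}}(X;D)\sim \frac{\deg X}{n!}D^n$, one has $D\,\h_{\mathrm{geom}}(X;D)=\Theta(D^{n+1})$. The correct reason the $\eps$- and $\delta$-terms in \eqref{s2} contribute only $O(\eps)+\frac{\log\delta}{k}O(1)$ after dividing by $D^{n+1}/(n+1)!$ is that their \emph{coefficients} $-\log(1-\eps)=O(\eps)$ and $-\frac{\log\delta}{k}$ carry the smallness, while $\frac{(n+1)!}{D^{n+1}}\,D\,\h_{\mathrm{geom}}(X;D)$ tends to the finite constant $(n+1)\deg X$; only the remaining terms $\tfrac12\log\binom{N+D}{N}$ and $\h_{\mathrm{geom}}(X;D)\log\mu_\delta$ are genuinely $o(D^{n+1})$.
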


\bibliographystyle{plain}

\bibliography{biblio}

\vspace{2cm}

Institute of Mathematics, Academia Sinica, Astronomy-Mathematics Building,
6F, No. 1, Sec. 4, Roosevelt Road, Taipei 10617, TAIWAN\\
\emph{E-mail}:\,\ttfamily{hajli@math.sinica.edu.tw, hajlimounir@gmail.com}

\end{document}